\renewcommand{\a}{\mathbf{a}}
\newcommand{\e}{\epsilon}
\newcommand{\x}{\mathbf{x}}
\newcommand{\y}{\mathbf{y}}
\newcommand{\g}{\frak{g}}
\renewcommand{\L}{\mathcal{L}}
\newcommand{\Hom}{\mathrm{Hom}}
\newcommand{\Coder}{\mathrm{Coder}}
\newcommand{\Der}{\mathrm{Der}}
\newcommand{\p}{\prime}
\renewcommand{\c}{\circ}
\newcommand{\ot}{\otimes}
\newcommand{\pa}{\partial}
\newcommand{\what}{\widehat}
\newtheorem{definition}{Definition}[section]
\newtheorem{lemma}[definition]{Lemma}
\newtheorem{proposition}[definition]{Proposition}
\newtheorem{theorem}[definition]{Theorem}
\newtheorem{corollary}[definition]{Corollary}
\newtheorem{remark}[definition]{Remark}
\newtheorem{example}[definition]{Example}
\date{}
\begin{document}

\title{
Higher derived brackets,
strong homotopy associative algebras and Loday pairs
}
\author{K. UCHINO}
\maketitle
\footnote{Mathematics Subject Classifications (2000):
17A32, 53D17}
\footnote{Keywords: strong homotopy associative algebras,
higher derived brackets, Leibniz pairs}
\abstract
{
We give a quick method of constructing strong homotopy
associative algebra, namely,
the higher derived product construction.
This method is associative analogue of classical
higher derived bracket construction in the category
of Loday algebras.
We introduce a new type of algebra, {\em Loday pair},
which is noncommutative analogue of classical Leibniz pair.
We study strong homotopy Loday pairs and
the higher derived brackets on the Loday pairs.
}
\section{Introduction}

Let $(\g,d,[,])$ be a differential graded
(dg, for short) Lie algebra.
We define a new product by $[x,y]_{d}:=(-1)^{|x|}[dx,y]$.
This product is called a \textbf{derived bracket}
of Koszul-Kosmann-Schwarzbach (\cite{K}).
It is known that the algebra of the derived bracket
is a Loday algebra (so-called Leibniz algebra),
i.e., $[x,y]_{d}$ satisfies the Leibniz identity:
$$
[x,[y,z]_{d}]_{d}=[[x,y]_{d},z]_{d}+(-1)^{|x||y|}[y,[x,z]_{d}]_{d}.
$$
The derived bracket construction is a method of
constructing new algebra structure.
It plays important roles in modern analytical
mechanics and in differential geometry.
For instance, a Poisson bracket on a smooth manifold
is a derived bracket of a graded Poisson bracket
which is called a Schouten-Nijenhuis bracket.
$$
\{f,g\}=(-1)^{f}[[\pi,f]_{SN},g]_{SN},
$$
where $f,g$ are smooth functions,
$\pi$ is a Poisson structure tensor,
$[,]_{SN}$ is the Schouten-Nijenhuis bracket
and $\{,\}$ is the induced Poisson bracket.
Since $d_{\pi}:=[\pi,-]_{SN}$ is a differential,
the Poisson bracket is a derived bracket.
We recall another example of derived brackets.
Let $f=f(p,q)$ and $g=g(p,q)$ be super functions
on a super symplectic-manifold,
where $(p,q)$ is a canonical cordinate of the manifold.
We consider a Laplacian with odd degree
$\Delta_{BV}:=\sum(\pm)\frac{\pa^{2}}{\pa p\pa q}$
and define a differential $d_{BV}:=[\Delta_{BV},-]$.
It is known that
the derived bracket associated with $d_{BV}$
is a Poisson bracket (so-called BV-bracket):
$$
(f,g)=
\sum\frac{f\overleftarrow{\pa}}{\pa p}
\frac{\overrightarrow{\pa}g}{\pa q}-
\frac{f\overleftarrow{\pa}}{\pa q}
\frac{\overrightarrow{\pa}g}{\pa p}
=(\pm)[d_{BV}\hat{f},\hat{g}],
$$
where $\hat{f}(-):=f\times(-)$ is a scalar multiplier,
$(\pm)$ is an appropriate sign
and $[,]$ is a Lie bracket (commutator).
Thus various bracket products (Lie algebroid
brackets, Courant brackets, BV-brackets and so on)
are given as derived brackets (see \cite{K2}).\\
\indent
The idea of the derived bracket arises
in several mathematical areas.
We recall a derived bracket
in the category of associative algebras.
Let $(A,*,d)$ be a dg associative algebra.
We define a modified product by
$a*_{d}b:=(-1)^{|a|}(da)*b$, $a,b\in A$.
Then it is again associative.
This new product is called a \textbf{derived product},
which is used in the study
of Loday type algebras (cf. Loday \cite{Lod}).
The derived bracket/product constructions
have been extended to any algebra over binary
quadratic operad in \cite{U1}.\\
\indent
We consider $n$-fold derived brackets
composed of Lie brackets:
$$
[x_{1},...,x_{n}]_{d}:=
(\pm)[[...[[dx_{1},x_{2}],x_{3}]...],x_{n}].
$$
Such higher brackets
were studied by several authors in various contexts
(cf. Akman (1996) \cite{Ak},
Vallejo (2001) \cite{Va},
Roytenberg (2002) \cite{Roy1}
and Voronov (2005) \cite{Vo}).
Koszul's original type higher derived brackets,
which are denoted by $\Phi$,
are defined on super commutative algebras by
$$
\Phi^{n}_{\Delta}(a_{1},...,a_{n}):=
[[...[d\hat{a}_{1},\hat{a}_{2}],...],\hat{a}_{n}](1)
$$
where $d:=[\Delta,-]$ and where
$\Delta$ is a certain differential operator
like $\Delta_{BV}$ above.
The higher brackets $\Phi$ are used to study
{\em higher order} differential operators (cf. \cite{Ak},\cite{Va}).
In \cite{U2}, the author studied a higher derived bracket
construction in the category of {\em Loday} algebras.
We briefly describe the result in \cite{U2}.
Let $(V,\delta_{0})$ be a dg Loday algebra and let
$$
d:=\delta_{0}+t\delta_{1}+t^{2}\delta_{2}+\cdot\cdot\cdot
$$
be a formal deformation of $\delta_{0}$, where $dd=0$.
Define a higher derived bracket on the Loday algebra by
$$
l_{n}(x_{1},...,x_{n}):=
(\pm)[[...[[\delta_{n-1}x_{1},x_{2}],x_{3}]...],x_{n}],
$$
where the binary bracket $[,]$ is a Loday bracket.
It was shown that
the collection of the higher derived brackets
$(l_{1},l_{2},l_{3},...)$
provides a strong homotopy (shortly, sh)
Loday algebra structure
(also called Loday $\infty$-algebra or sh Leibniz algebra
or Leibniz $\infty$-algebra).
If each $l_{n\ge 2}$ is skewsymmetric,
then the sh Loday algebra is an sh Lie ($L_{\infty}$-)algebra.
This proposition is a homotopy version of the binary
derived bracket construction in \cite{K}.
\medskip\\
\indent
The first aim of this note is to study
a higher version of the derived product construction.
Let $(A,\delta_{0})$ be a dg associative algebra
and $d=\sum_{i\ge 0}t^{i}\delta_{i}$ be
a formal deformation of $\delta_{0}$.
Define a higher derived products by
$$
m_{n}(a_{1},...,a_{n}):=
(\pm)\big(\delta_{n-1}a_{1}\big)*a_{2}*\cdot\cdot\cdot*a_{n}.
$$
We show in Theorem \ref{main} below
that the system with the higher derived products
$(A,m_{1},m_{2},...)$
becomes an sh associative algebra (or $A_{\infty}$-algebra).\\
\indent
The second aim of this note is to unify
the higher derived bracket/product constructions.
To complete this task we recall {\em Leibniz pairs}.
The notion of Leibniz pair was introduced by
Flato-Gerstenhaber-Voronov \cite{FGV},
motivated by the study of deformation quantization.
The Leibniz pairs are defined to be
the pairs of Lie and associative algebras $(\g,A)$
equipped with derivation representations
$rep:\g\to\Der(A)$. The representation
satisfies the following two derivation relations:
\begin{eqnarray}
\label{dact1} [x,[a,b]]&=&[[x,a],b]+[a,[x,b]],\\
\label{dact2} [x,[y,a]]&=&[[x,y],a]+[y,[x,a]],
\end{eqnarray}
where $x,y\in\g$, $a,b\in A$ and where $[x,a]$
is the derivation action of $L$ on $A$
and $[a,b]$ is the associative
multiplication on $A$, i.e., $[a,[b,c]]=[[a,b],c]$.
We recall two typical examples of Leibniz pairs.\\
\noindent
a) The self pair of a Poisson algebra $P$,
$(P,P)$, is obviously a Leibniz pair.\\
b) Let $\g\to M$ be a Lie algebroid
over a smooth manifold $M$.
Then the pair $(\Gamma \g,C^{\infty}(M))$
is a Leibniz pair, where $\Gamma \g$ is the space
of sections of $\g$.
\medskip\\
\indent
We consider the pairs of Loday algebras
and associative algebras satisfying (\ref{dact1})
and (\ref{dact2}).
We call such pairs the {\em Loday pairs}.
There exists interesting examples of Loday pairs,
which are regarded as noncommutative analogues of Examples a),b).\\
a-1) It is known that
a Poisson manifold is a classical solution of 
a master equation
associated with 2-dimensional topological field theory
(cf. Cattaneo-Felder \cite{CF1,CF2}).
In the 3-dimensional cases,
the classical solutions are known as {\em Courant algebroids}
(Ikeda \cite{IK1,IK2,IK3}, see also \cite{Roy2}).
A Courant algebroid is a vector bundle
$E\to M$ of which the space of sections is a Loday algebra
satisfying some axioms.
When $E$ is a Courant algebroid,
the pair $(\Gamma E,C^{\infty}(M))$ is a Loday pair.\\
b-1) Let $\L\to M$ be a vector bundle over $M$
with a bundle map $\rho:\L\to TM$.
$\L$ is called a Leibniz algebroid
(Ibanez and collaborators \cite{I}),
if the space of sections $\Gamma \L$ has
a Loday bracket satisfying
$[X_{1},fX_{2}]=f[X_{1},X_{2}]+\rho(X_{1})(f)X_{2}$,
where $X_{1},X_{2}\in\Gamma\L$ and $f\in C^{\infty}(M)$.
When $\L$ is a Leibniz algebroid,
the pair $(\Gamma\L,C^{\infty}(M))$ is a Loday pair.\\
\indent
The place of Loday pairs among other objects
may be illustrated by the following table.
\begin{center}
\begin{tabular}{|c|c|c|c|} 
\hline
$\g\backslash A$& commutative & noncommutative & dimension \\ \hline
commutative & Poisson algebras & classical Leibniz pairs & 2 \\
  & Lie algebroids &  & \\ \hline
noncommutative & Courant algebroids & Loday pairs & 3\\ 
 & Leibniz algebroids & & \\ 
\hline
\end{tabular} 
\end{center}
Loday pairs are noncommutative analogues of
Courant/Leibniz algebroids.
\medskip\\
\indent
We introduce a coalgebra description of Loday pairs, and then
study higher derived bracket construction
in the category Loday pairs (see Section 4).
The Leibniz pairs up to homotopy
which are called sh Leibniz pairs
are studied by Kajiura-Stasheff \cite{KS1,KS2}
and by Hoefel \cite{H}
in the context of open-closed string field theory.
We introduce a new type of homotopy algebra,
{\em sh Loday pair}, which is considered as
a noncommutative analogue of Leibniz pair.
We show that sh associative/Loday algebras
are both subalgebras of sh Loday pairs.
The higher derived brackets in the catrogy
of Loday pairs are defined by
$$
n_{i+j}(x_{1},...,x_{i},a_{1},...,a_{j}):=
(\pm)[[...[[[...[\delta_{i+j-1}x_{1},x_{2}],...],
x_{i}],a_{1}],...],a_{j}],
$$
where $x_{\cdot}\in L$, $a_{\cdot}\in A$
and $[,]$ is a multiplication on a Loday pair.
The higher derived brackets and the higher derived products
are both subsystem of $\{n_{i+j}\}$.
The second main result of this note is as follows.
Let $(L,A,\delta_{0})$ be a Loday pair $(L,A)$
with differential $\delta_{0}$
and let $d:=\sum_{i\ge 0}t^{i}\delta_{i}$ be
a deformation of $\delta_{0}$.
We prove in Proposition \ref{lasprop} that
the system with the unified higher derived brackets
$(n_{1},n_{2},n_{3},...)$ is an sh Loday pair.
\medskip\\
\noindent
\textbf{Acknowledgements}.
I would like to say thank to professors Jim Stasheff
and Akira Yoshioka for their kind advices.
\medskip\\
\noindent
\textbf{Notations and Assumptions}.
In the following,
we assume that the characteristic
of the ground field $\mathbb{K}$
is zero and that a tensor product
is defined over the field, $\ot:=\ot_{\mathbb{K}}$.
We follow the Koszul sign convention.
For instance, a linear map $f\ot g:V\ot V\to V\ot V$
satisfies, for any $v_{1}\ot v_{2}\in V\ot V$,
$$
(f\ot g)(v_{1}\ot v_{2})=(-1)^{|g||v_{1}|}f(v_{1})\ot g(v_{2}),
$$
where $|g|$ and $|v_{1}|$ are degrees of $g$ and $v_{1}$.
We will use a degree shifting operator
$s$ (resp. $s^{-1}$) with degree $+1$ (resp. $-1$).
The shifting operators satisfy
$s\ot s=(s\ot 1)(1\ot s)
=-(1\ot s)(s\ot 1)$.
We denote by $(-1)^{o}$
the sign $(-1)^{|o|}$
without miss reading.
\section{Preliminaries}
We consider the tensor space
over a graded vector space $V$:
$$
\bar{T}V:=V\oplus V^{\ot 2}\oplus\cdot\cdot\cdot.
$$
The space $\bar{T}V$ has an associative coalgebra
structure, $\Delta:\bar{T}V\to \bar{T}V\ot\bar{T}V$, defined by
$\Delta(V):=0$ and
\begin{equation}\label{asscomul}
\Delta(v_{1},...,v_{n}):=
\sum^{n}_{i=1}(v_{1},...,v_{i})\ot(v_{i+1},...,v_{n}),
\end{equation}
where $v_{i}\in V$.
Then $(\bar{T}V,\Delta)$ becomes a cofree coalgebra
in the category of {\em nilpotent} coalgebras.
Let $\Coder(\bar{T}V)$ be the space of coderivations, i.e.,
$D^{c}\in\Coder(\bar{T}V)$ satisfies
the coderivation rule:
$$
(D^{c}\ot 1)\Delta+(1\ot D^{c})\Delta=\Delta D^{c}.
$$
It is well-known that $\Coder(\bar{T}V)$ is identified
with the space of the endomorphisms on $V$:
\begin{equation}\label{keyiso}
\Hom(\bar{T}V,V)\cong\Coder(\bar{T}V).
\end{equation}
We recall an explicit formula of the isomorphism.
For a given $i$-ary endomorphism $f:V^{\ot i}\to V$,
we define a coderivation $f^{c}$ by
$f^{c}(V^{n<i}):=0$ and
$$
f^{c}(v_{1},...,v_{n\ge i}):=
\sum^{n-i}_{k=0}(-1)^{f(v_{1}+\cdot\cdot\cdot+v_{k})}
(v_{1},...,v_{k},f(v_{k+1},...,v_{k+i}),v_{k+i+1},...,v_{n}).
$$
The inverse of the mapping
$f\mapsto f^{c}$ is the restriction
(so-called corestriction).\\
\indent
The space $\Coder(\bar{T}V)$ has a canonical Lie bracket of graded
commutator. Therefore $\Hom(\bar{T}V,V)$ has a Lie bracket
which is induced by the isomorphism (\ref{keyiso}).
The induced Lie bracket on $\Hom(\bar{T}V,V)$,
which is denoted by $\{f,g\}$,
is well-known as a {\em Gerstenhaber bracket} on a Hochschild complex.
If $sV$ (the shifted space of $V$) is an associative algebra,
then $\Hom(\bar{T}V,V)$ becomes a Hochschild complex:
$$
\cdot\cdot\cdot\overset{b}{\to}\Hom(V^{\ot n},V)
\overset{b}{\to}\Hom(V^{\ot n+1},V)\overset{b}{\to}\cdot\cdot\cdot.
$$
The coboundary map $b$ is induced by the associative
structure on $sV$ (see Remark \ref{defb}).\\
\indent
If $f^{c}$, $g^{c}$ are coderivations associated with
$i$-ary, $j$-ary endomorphisms, respectively,
then the Lie bracket $[f^{c},g^{c}]$ is
the coderivation associated with
the Gerstenhaber bracket of $f$ and $g$, i.e.,
$$
\{f,g\}^{c}=[f^{c},g^{c}],
$$
where $\{f,g\}$ is an $(i+j-1)$-ary endomorphism.\\
\indent
In the following, we identify $\Coder(\bar{T}V)$ with $\Hom(\bar{T}V,V)$.
Hence we omit the subscript ``$c$" from $f^{c}$
without miss reading.
\begin{definition}
Let $sV$ be the shifted space equipped with
a collection of $i(\ge 1)$-ary endomorphisms,
$m_{i}:(sV)^{\ot i}\to sV$.
We assume that the degree of $m_{i}$ is $2-i$
for each $i$.
We set the shifted map:
$$
\pa_{i}:=s^{-1}\c m_{i}\c(\overbrace{s\ot\cdot\cdot\cdot\ot s}^{i}).
$$
This is an element in $\Hom(\bar{T}V,V)$
or in $\Coder(\bar{T}V)$ up to the identification.
We define a coderivation by
$$
\pa:=\pa_{1}+\pa_{2}+\cdot\cdot\cdot.
$$
The system $(sV,m_{1},m_{2},...)$
is called a strong homotopy (sh) associative algebra,
or sometimes called an $A_{\infty}$-algebra,
if $\pa$ is square zero, or equivalently,
$$
\frac{1}{2}[\pa,\pa]=0.
$$ 
\end{definition}
\begin{remark}\label{defb}
The usual associative algebra can be seen
as a special sh associative algebra such that 
$\pa_{n\neq 2}=0$. In such a case,
we put $b(-):=[\pa_{2},-]$. Then $b$ becomes
the coboundary map of the Hochschild complex.
\end{remark}
\section{Associative cases}

\subsection{Derived products}

Let $(A,*,\delta_{0})$ be a differential graded (dg)
associative algebra.
We consider a deformation of $\delta_{0}$:
$$
d:=\delta_{0}+t\delta_{1}+t^{2}\delta_{2}+\cdot\cdot\cdot.
$$
The deformation $d$ is a square zero derivation on $A[[t]]$.
The square zero condition of $d$ is equivalent to
the following condition.
\begin{equation}\label{dd0}
\sum_{i+j=Const}\delta_{i}\delta_{j}=0.
\end{equation}
We define the higher derived products on $sA$ by
$$
m_{i}:=(-1)^{\frac{(i-1)(i-2)}{2}}
s\c M_{i}\c
(\overbrace{s^{-1}\ot\cdot\cdot\cdot\ot s^{-1}}^{i})
(s\delta_{i-1}s^{-1}\ot
\overbrace{1\ot\cdot\cdot\cdot\ot 1}^{i-1}),
$$
where
$M_{i}(a_{1},...,a_{i}):=a_{1}*a_{2}*\cdot\cdot\cdot*a_{i}$
for any $a_{1},...,a_{i}\in A$.
By a direct computation, we have
$$
m_{i}(sa_{1},...,sa_{i})=
(\pm)s\Big(\delta_{i-1}a_{1}*a_{2}*\cdot\cdot\cdot*a_{i}\Big),
$$
where
\begin{eqnarray*}
\pm=\left\{
\begin{array}{ll}
(-1)^{a_{1}+a_{3}+\cdot\cdot\cdot+a_{2n+1}+\cdot\cdot\cdot} & i=\text{even},\\
(-1)^{a_{2}+a_{4}+\cdot\cdot\cdot+a_{2n}+\cdot\cdot\cdot} & i=\text{odd}.
\end{array}
\right.
\end{eqnarray*}
The main theorem of this note is as follows.
\begin{theorem}\label{main}
The system with the higher derived products
$(sA,m_{1},m_{2},...)$ is an sh associative algebra.
\end{theorem}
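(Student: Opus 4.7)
The plan is to verify the $A_\infty$ Stasheff relations
$$
\sum_{i+j=n+1}\sum_{k=1}^{i}(\pm)\, m_i\bigl(a_1,\ldots,a_{k-1},m_j(a_k,\ldots,a_{k+j-1}),a_{k+j},\ldots,a_n\bigr) = 0
$$
at each arity $n \geq 1$. By the identification $\Coder(\bar{T}V)\cong\Hom(\bar{T}V,V)$ of Section 2, these are equivalent to the square-zero condition $\tfrac12[\pa,\pa]=0$ for $\pa = \sum_i \pa_i$. The available ingredients are: associativity of $*$; the derivation rule $\delta_i(a*b) = \delta_i(a)*b + (-1)^{|a|} a*\delta_i(b)$ for each $i$ (extracted from the hypothesis that $d$ is a derivation of $A[[t]]$); and the quadratic square-zero identity $\sum_{p+q=k}\delta_p\delta_q=0$ of~(\ref{dd0}).

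Substituting the explicit formula $m_p(sa_1,\ldots,sa_p) = \pm s\bigl(\delta_{p-1}(a_1)*a_2*\cdots*a_p\bigr)$ into each insertion term produces two types of contributions. A \emph{first-slot} insertion ($k=1$) of $m_j$ into $m_i$ gives $\delta_{i-1}\bigl(\delta_{j-1}(a_1)*a_2*\cdots*a_j\bigr)*a_{j+1}*\cdots*a_n$, which the derivation rule splits into one \emph{nested} term $(\delta_{i-1}\delta_{j-1})(a_1)*a_2*\cdots*a_n$ plus $j-1$ \emph{propagated} terms of the form $\delta_{j-1}(a_1)*\cdots*\delta_{i-1}(a_\ell)*\cdots*a_n$ with $\ell \in \{2,\ldots,j\}$. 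A \emph{later-slot} insertion ($k \geq 2$) yields a single \emph{separated} term $\delta_{i-1}(a_1)*\cdots*\delta_{j-1}(a_k)*\cdots*a_n$, because $m_i$ differentiates only its first argument. Associativity of $*$ is invoked silently to flatten nested products.

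The cancellations organize by the shape of the output. Summing the nested terms over all $(i,j)$ with $i+j=n+1$ produces $\bigl(\sum_{p+q=n-1}\delta_p\delta_q\bigr)(a_1)*a_2*\cdots*a_n$, which vanishes by~(\ref{dd0}). A separated-shape term $\delta_p(a_1)*\cdots*\delta_q(a_\ell)*\cdots*a_n$ (with $\ell \geq 2$, $p+q=n-1$) appears twice in the full sum: once as a later-slot insertion of $m_{q+1}$ at slot $\ell$ of $m_{p+1}$, and once as a propagated contribution from the first-slot insertion of $m_{p+1}$ into $m_{q+1}$ (with $\delta_q$ propagating through the product down to position $\ell$). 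The Stasheff-relation sign and the derivation-rule sign are arranged precisely so that these two contributions cancel.

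The chief technical obstacle is sign bookkeeping: the prefactor $(-1)^{(i-1)(i-2)/2}$ in the definition of $m_i$, the shift-operator signs accumulated in passing from $m_i$ to $\pa_i = s^{-1}\c m_i\c s^{\otimes i}$, the Koszul signs in the Stasheff relation, and the signs from the derivation rule must all conspire correctly. I would verify the signs first on the base cases: $n=1$ reduces to $\pa_1^2=0$, equivalent to the $k=0$ instance $\delta_0^2=0$ of~(\ref{dd0}); $n=2$ is the smallest arity exhibiting the propagated-separated pairing and fixes the general sign pattern, which then extends uniformly to all $n$.
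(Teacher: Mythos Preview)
Your approach is correct and, at the level of the underlying computation, coincides with the paper's: your nested/propagated/separated trichotomy is exactly the three groups of terms appearing in the paper's Lemma~\ref{pl}, and your pairwise cancellation of propagated against separated terms is precisely the mechanism by which that lemma reduces $[M_iD,M_jD']$ to $M_{i+j-1}[D,D']$.

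The organizational difference is worth noting. The paper first computes (Lemma preceding Lemma~\ref{pl}) that the shifted operations satisfy $\pa_i = M_i\c(\delta_{i-1}\ot 1\ot\cdots\ot 1)$ with \emph{no residual sign}, so that all the prefactors $(-1)^{(i-1)(i-2)/2}$ and the shift-operator signs disappear once and for all. It then proves the clean structural identity $[M_iD,M_jD'] = M_{i+j-1}[D,D']$ for arbitrary derivations $D,D'$, from which the theorem follows in one line via $\sum[\pa_i,\pa_j]=M_{i+j-1}\sum[\delta_{i-1},\delta_{j-1}]=0$. Your route keeps the $m_i$'s and the Stasheff signs in play throughout, which is why the sign bookkeeping becomes, as you say, the chief obstacle; the paper's packaging isolates that obstacle into a single preliminary computation and then works sign-free. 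Your plan to ``fix the general sign pattern'' at $n=2$ and extend is the weakest link of the write-up; it would be more robust to carry out the shift to $\pa_i$ first, exactly as the paper does, and then your cancellation argument goes through with all signs $+1$.
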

We need some lemmas in order to show this theorem.
\begin{lemma}
Let $\pa_{i}$ be the coderivation associated with
the higher derived product $m_{i}$.
Then $\pa_{i}$ has the following form.
$$
\pa_{i}=M_{i}\c(\delta_{i-1}\ot\overbrace{1\ot\cdot\cdot\cdot\ot 1}^{i-1}).
$$
\end{lemma}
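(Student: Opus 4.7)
The plan is to unravel the definition of $\pa_i$ and simplify the resulting composition of degree-shifting operators. By definition $\pa_i = s^{-1}\c m_i\c(s\ot\cdots\ot s)$; substituting the given formula for $m_i$, the outer $s^{-1}\c s$ cancels, leaving the explicit prefactor $(-1)^{(i-1)(i-2)/2}$ together with
$$M_i\c(s^{-1}\ot\cdots\ot s^{-1})\c(s\delta_{i-1}s^{-1}\ot 1\ot\cdots\ot 1)\c(s\ot\cdots\ot s).$$
So the task reduces to showing that the chain of tensored $s^{\pm 1}$'s simplifies to $(-1)^{(i-1)(i-2)/2}\,(\delta_{i-1}\ot 1\ot\cdots\ot 1)$, so that the Koszul sign exactly cancels the prefactor.

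First I would compose the two rightmost factors using the rule
$$(A_1\ot\cdots\ot A_n)\c(B_1\ot\cdots\ot B_n)=(-1)^{\sum_{k<j}|A_j||B_k|}(A_1 B_1)\ot\cdots\ot(A_n B_n).$$
For $(s\delta_{i-1}s^{-1}\ot 1\ot\cdots\ot 1)\c(s\ot\cdots\ot s)$ only the first left-factor carries nonzero degree while all other $A_j$ are identities, so no sign is produced and the result is $s\delta_{i-1}\ot s\ot\cdots\ot s$. Next I would compose $(s^{-1})^{\ot i}$ on the left. Since $|s\delta_{i-1}|\equiv 0\pmod 2$, the $k=1$ column contributes nothing; for each pair $2\le k<j\le i$ we pick up $|s^{-1}||s|\equiv 1\pmod 2$. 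There are $(i-1)(i-2)/2$ such pairs, so the accumulated Koszul sign is $(-1)^{(i-1)(i-2)/2}$, and the operators telescope via $s^{-1}s=1$ to give $(-1)^{(i-1)(i-2)/2}(\delta_{i-1}\ot 1\ot\cdots\ot 1)$.

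Multiplying by the explicit prefactor of $m_i$, the two signs cancel and pre-composition with $M_i$ yields the stated formula. The only real obstacle is careful Koszul bookkeeping; in fact the prefactor $(-1)^{(i-1)(i-2)/2}$ in the definition of $m_i$ is included precisely to absorb the sign produced by dragging $(i-1)$ copies of $s^{-1}$ past $(i-1)$ copies of $s$ in the tail slots, leaving this clean expression for the associated coderivation.
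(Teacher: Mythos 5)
Your proposal is correct and follows essentially the same route as the paper: both unwind the definition $\pa_{i}=s^{-1}\c m_{i}\c(s\ot\cdots\ot s)$ and track Koszul signs through the tensored shift operators; you merely collapse $(s^{-1}\ot\cdots\ot s^{-1})\c(s\delta_{i-1}\ot s\ot\cdots\ot s)$ in one step, producing $(-1)^{(i-1)(i-2)/2}$ directly, whereas the paper first extracts $\delta_{i-1}$ to the right (sign $(-1)^{i-1}$) and then cancels $(s^{-1})^{\ot i}(s)^{\ot i}$ (sign $(-1)^{i(i-1)/2}$) --- the two sign tallies agree modulo $2$ since $(i-1)+\tfrac{i(i-1)}{2}-\tfrac{(i-1)(i-2)}{2}=2(i-1)$.
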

\begin{proof}
\begin{eqnarray*}
\pa_{i}&:=&s^{-1}\c m_{i}\c(s\ot\cdot\cdot\cdot\ot s)\\
&=&(-1)^{\frac{(i-1)(i-2)}{2}}
M_{i}\c(s^{-1}\ot\cdot\cdot\cdot\ot s^{-1})
(s\delta_{i-1}s^{-1}\ot 1\ot\cdot\cdot\cdot\ot 1)(s\ot\cdot\cdot\cdot\ot s)\\
&=&(-1)^{\frac{(i-1)(i-2)}{2}}
M_{i}\c(s^{-1}\ot\cdot\cdot\cdot\ot s^{-1})
(s\delta_{i-1}\ot s\ot\cdot\cdot\cdot\ot s)\\
&=&(-1)^{\frac{(i-1)(i-2)}{2}}(-1)^{i-1}
M_{i}\c(s^{-1}\ot\cdot\cdot\cdot\ot s^{-1})
(s\ot s\ot\cdot\cdot\cdot\ot s)(\delta_{i-1}\ot 1\ot\cdot\cdot\cdot\ot 1)\\
&=&(-1)^{\frac{(i-1)(i-2)}{2}}(-1)^{i-1}(-1)^{\frac{i(i-1)}{2}}
M_{i}\c(\delta_{i-1}\ot 1\ot\cdot\cdot\cdot\ot 1).
\end{eqnarray*}
\end{proof}
Let $\Der(A)$ be the space of derivations on the algebra $(A,*)$.
For any $D\in\Der(A)$, we define an $i$-ary map by
$$
M_{i}D:=M_{i}\c(D\ot
\overbrace{1\ot\cdot\cdot\cdot\ot 1}^{i-1}),
$$
in particular, $M_{1}D=D$. One can identify $M_{i}D$ with
a coderivation in $\Coder(\bar{T}A)$.
\begin{lemma}\label{pl}
For any $D,D^{\p}\in\Der(A)$ and for any $i,j$,
the Lie bracket of the coderivations,
$[M_{i}D,M_{j}D^{\p}]$, is compatible with
the one of the derivations, $[D,D^{\p}]$, namely,
$$
[M_{i}D,M_{j}D^{\p}]=M_{i+j-1}[D,D^{\p}].
$$
\end{lemma}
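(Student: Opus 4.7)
The plan is to compute the graded commutator $[M_iD,M_jD^\p]$ directly as a Gerstenhaber bracket in $\Hom(\bar{T}A,A)$. Since the bracket of two coderivations of arities $i$ and $j$ is again a coderivation of arity $i+j-1$, it suffices to verify the identity after corestriction, i.e.\ on a generic tensor $a_1\ot\cdots\ot a_{i+j-1}$. I would unfold $\{M_iD,M_jD^\p\}(a_1,\ldots,a_{i+j-1})$ as a sum of slot insertions $\sum_k(\pm)(M_iD)\c_k(M_jD^\p) - (-1)^{|D||D^\p|}\sum_k(\pm)(M_jD^\p)\c_k(M_iD)$, where $f\c_k g$ denotes insertion of $g$ into the $k$-th slot of $f$.

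The crucial feature of the operations $M_iD$ is that $D$ is hardwired in the first slot: $M_iD(b_1,\ldots,b_i)=D(b_1)*b_2*\cdots*b_i$. Hence the insertion at the first slot of $M_iD$ produces $D(D^\p(a_1)*a_2*\cdots*a_j)*a_{j+1}*\cdots*a_{i+j-1}$, which the derivation rule for $D$ expands as $DD^\p(a_1)*a_2*\cdots*a_{i+j-1}$ plus ``cross terms'' of shape $D^\p(a_1)*\cdots*D(a_l)*\cdots$ for $l=2,\ldots,j$. Insertions at any slot $k\ge 2$ simply produce $D(a_1)*\cdots*D^\p(a_k)*\cdots*a_{i+j-1}$. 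The symmetric analysis for $(M_jD^\p)\c_k(M_iD)$ yields the ``diagonal'' term $D^\p D(a_1)*a_2*\cdots*a_{i+j-1}$ together with cross terms of the same two shapes.

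The key combinatorial step is to match the cross terms in pairs: a term with $D$ at position $1$ and $D^\p$ at some position $k\ge 2$ arising from the slot-$k$ insertion in $(M_iD)\c_k(M_jD^\p)$ matches a term of the identical shape arising, after applying the derivation rule for $D^\p$, from the $k=1$ insertion in $(M_jD^\p)\c_1(M_iD)$; these appear with opposite signs thanks to the $(-1)^{|D||D^\p|}$ prefactor of the graded commutator, so they cancel. An analogous pairing eliminates the cross terms having $D^\p$ at position $1$.

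What survives is $DD^\p(a_1)*a_2*\cdots*a_{i+j-1}-(-1)^{|D||D^\p|}D^\p D(a_1)*a_2*\cdots*a_{i+j-1}=[D,D^\p](a_1)*a_2*\cdots*a_{i+j-1}$, which is exactly $M_{i+j-1}[D,D^\p](a_1,\ldots,a_{i+j-1})$. The main obstacle is sign bookkeeping: the Koszul signs coming from the coderivation formula, from the graded Leibniz rule for $D$ and $D^\p$, and from the graded commutator prefactor must all conspire to pair the cross terms with opposite sign. This is routine Koszul accounting rather than a deep conceptual step.
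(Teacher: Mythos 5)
Your proposal is correct and takes essentially the same route as the paper: the paper likewise evaluates $M_{i}D\c M_{j}D^{\p}$ and $M_{j}D^{\p}\c M_{i}D$ on a generic tensor $(a_{1},\ldots,a_{i+j-1})$, expands the first-slot composite by the derivation rule, and cancels the two families of cross terms against each other in the graded commutator, leaving exactly the diagonal term $[D,D^{\p}]a_{1}*a_{2}*\cdots*a_{i+j-1}=M_{i+j-1}[D,D^{\p}](a_{1},\ldots,a_{i+j-1})$. The only cosmetic difference is that you phrase the computation via Gerstenhaber slot insertions with full Koszul bookkeeping, while the paper writes the insertion sums explicitly and simplifies by ignoring the degrees of the variables.
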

\begin{proof}
We assume for the sake of simplicity
that the variables have no degree.
We put $\a=(a_{1},...,a_{i+j-1})\in A^{\ot(i+j-1)}$.
Then we have
\begin{eqnarray*}
M_{i}D\c M_{j}D^{\p}(\a)&=&
\sum^{i-1}_{s=0}Da_{1}*\cdot\cdot\cdot*D^{\p}a_{s+1}*\cdot\cdot\cdot*a_{i+j-1} \\
&=&
D\big(D^{\p}a_{1}*\cdot\cdot\cdot*a_{j}\big)*\cdot\cdot\cdot*a_{i+j-1}+
\sum^{i-1}_{s=1}Da_{1}*\cdot\cdot\cdot*D^{\p}a_{s+1}*\cdot\cdot\cdot*a_{i+j-1}\\
&=&
(DD^{\p}a_{1})*\cdot\cdot\cdot*a_{i+j-1}+
(-1)^{DD^{\p}}\sum^{j}_{t=2}D^{\p}a_{1}*\cdot\cdot\cdot*Da_{t}*\cdot\cdot\cdot*a_{i+j-1}\\
&+&
\sum^{i-1}_{s=1}Da_{1}*\cdot\cdot\cdot*D^{\p}a_{s+1}*\cdot\cdot\cdot*a_{i+j-1}.
\end{eqnarray*}
On the other hand, we have
\begin{eqnarray*}
M_{j}D^{\p}\c M_{i}D(\a)&=&
\sum^{j}_{t=1}D^{\p}a_{1}*\cdot\cdot\cdot*Da_{t}*\cdot\cdot\cdot*a_{i+j-1} \\
&=&
D^{\p}\big(Da_{1}*\cdot\cdot\cdot*a_{i}\big)*\cdot\cdot\cdot*a_{i+j-1}+
\sum^{j}_{t=2}D^{\p}a_{1}*\cdot\cdot\cdot*Da_{t}*\cdot\cdot\cdot*a_{i+j-1}\\
&=&
(D^{\p}Da_{1})*\cdot\cdot\cdot*a_{i+j-1}+
(-1)^{D^{\p}D}\sum^{i-1}_{s=1}Da_{1}*\cdot\cdot\cdot*D^{\p}a_{s+1}*\cdot\cdot\cdot*a_{i+j-1}\\
&+&
\sum^{j}_{t=2}D^{\p}a_{1}*\cdot\cdot\cdot*Da_{t}*\cdot\cdot\cdot*a_{i+j-1}.
\end{eqnarray*}
Hence we obtain
\begin{eqnarray*}
[M_{i}D,M_{j}D^{\p}](\a)&=&
(DD^{\p}a_{1})*\cdot\cdot\cdot*a_{i+j-1}-(-1)^{DD^{\p}}(D^{\p}Da_{1})*\cdot\cdot\cdot*a_{i+j-1}\\
&=&M_{i+j-1}[D,D^{\p}](\a).
\end{eqnarray*}
\end{proof}
We give a proof of Theorem \ref{main}:
\begin{proof}
The higher derived product $m_{i}$
corresponds to the coderivation $\pa_{i}=M_{i}\delta_{i-1}$.
The deformation condition $[d,d]/2=0$
corresponds to the homotopy algebra condition,
$$
\sum_{i+j=Const}[\pa_{i},\pa_{j}]=
\sum_{i+j=Const}[M_{i}\delta_{i-1},M_{j}\delta_{j-1}]=
M_{i+j-1}\sum_{i+j=Const}[\delta_{i-1},\delta_{j-1}]=0.
$$
\end{proof}
We consider the special case of $m_{n\neq 2}=0$,
namely, the case of trivial deformation:
$$
d=t\delta_{1}.
$$
\begin{corollary}
Assume that $m_{n\neq 2}=0$, or equivalently,
$sA$ is the usual associative algebra with
the binary derived product.
Then the collection of $\{M_{i}\Der(A)\}$ is a subcomplex
of the Hochschild complex $\Hom(\bar{T}A,A)$,
where
$$
M_{i}\Der(A):=\langle M_{i}D \ | \ D\in \Der(A)\rangle.
$$
\end{corollary}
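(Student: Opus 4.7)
The plan is to reduce the corollary to a direct application of Lemma \ref{pl}. In the setting of the corollary, the deformation is $d = t\delta_{1}$ with $\delta_{0}=0$, so the only surviving derived product is the binary one $m_{2}$, whose associated coderivation is $\pa_{2} = M_{2}\delta_{1}$. By Remark \ref{defb}, the Hochschild coboundary on $\Hom(\bar{T}A,A)$ is then $b = [\pa_{2},-] = [M_{2}\delta_{1},-]$. The goal is to show $b\bigl(M_{i}\Der(A)\bigr)\subseteq M_{i+1}\Der(A)$ for every $i\ge 1$.

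First I would pick an arbitrary $D\in\Der(A)$ and compute
\[
b(M_{i}D) \;=\; [M_{2}\delta_{1},\,M_{i}D].
\]
At this point Lemma \ref{pl} applies verbatim with $j=i$ and the roles $(D,D^{\p}) = (\delta_{1},D)$, yielding
\[
[M_{2}\delta_{1},M_{i}D] \;=\; M_{i+1}[\delta_{1},D].
\]
Next I would invoke the standard fact that the graded commutator of two derivations of an associative algebra is again a derivation, so $[\delta_{1},D]\in\Der(A)$; this places $M_{i+1}[\delta_{1},D]$ in $M_{i+1}\Der(A)$ by definition.

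Combining the two observations gives $b(M_{i}D) \in M_{i+1}\Der(A)$, which is exactly the subcomplex condition. I do not anticipate a genuine obstacle: the identification $\pa_{2} = M_{2}\delta_{1}$ was already established by the previous lemma, Lemma \ref{pl} does the bracket computation, and the closure of $\Der(A)$ under graded commutator is a textbook fact. The only point to be mindful of is the sign bookkeeping when $D$ or $\delta_{1}$ has nonzero degree, but this is automatically absorbed into the statement of Lemma \ref{pl}, which is proved without degree restrictions (the simplification ``assume the variables have no degree'' in its proof is only for notational clarity).
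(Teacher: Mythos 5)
Your proposal is correct and takes essentially the same approach as the paper: the paper's proof likewise identifies $b(-)=[\pa_{2},-]=[M_{2}\delta_{1},-]$ and immediately concludes $b(M_{i}D)=M_{i+1}[\delta_{1},D]$ via Lemma \ref{pl}. The points you spell out explicitly---that $[\delta_{1},D]\in\Der(A)$ because derivations are closed under the graded commutator, and that the lemma's sign bookkeeping covers graded $D$, $\delta_{1}$---are left implicit in the paper but are exactly the right details.
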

\begin{proof}
The coboundary map on $\Hom(\bar{T}A,A)$ is given by
$$
b(-):=[\pa_{2},-]=[M_{2}\delta_{1},-].
$$
Hence we obtain $b(M_{i}D)=M_{i+1}[\delta_{1},D]$.
\end{proof}
\subsection{Deformation theory}
We discuss a relationship
between deformation theory and sh associative algebras.
The main result of this subsection is Proposition \ref{lastprop} below.
A Loday algebra version of this proposition was shown in \cite{U1}.
\medskip\\
\indent
The deformation of $\delta_{0}$,
$d=\delta_{0}+t\delta_{1}+\cdot\cdot\cdot$,
is considered as a differential on $A[[t]]$
which is an associative algebra of formal series
with coefficients in $A$.
Let $h(t):=th_{1}+t^{2}h_{2}+\cdot\cdot\cdot$ be a derivation
on the associative algebra $A[[t]]$ with degree $|h(t)|:=0$.
We consider the second deformation
$d^{\p}=\sum t^{n}\delta^{\p}_{n}$.
The deformations $d$ and $d^{\p}$
are equivalent, if they are related via the
gauge transformation:
\begin{equation*}
d^{\p}:=exp(X_{h(t)})(d),
\end{equation*}
where $X_{h(t)}:=[-,h(t)]$.
We denote by $\pa^{\p}=\sum\pa_{n}^{\p}$
the induced sh associative structure associated with $d^{\p}$.
\begin{proposition}\label{lastprop}
If $d$ and $d^{\p}$ are gauge equivalent,
then the sh associative structures $\pa$ and $\pa^{\p}$
are equivalent, namely,
\begin{equation*}
\pa^{\p}=exp(X_{Mh})(\pa),
\end{equation*}
where $Mh$ is a well-defined infinite sum
of coderivations:
$$
Mh:=M_{2}h_{1}+M_{3}h_{2}+\cdot\cdot\cdot
+M_{i+1}h_{i}+\cdot\cdot\cdot,
$$
and the integral of $Mh$,
$$
e^{Mh}:=1+Mh+\frac{1}{2!}(Mh)^{2}+\cdot\cdot\cdot,
$$
is a dg coalgebra isomorphism between
$(\bar{T}A,\pa)$ and $(\bar{T}A,\pa^{\p})$, namely,
(\ref{rlas1}) and (\ref{rlas2}) below hold. 
\begin{eqnarray}
\label{rlas1}\pa^{\p}&=&e^{-Mh}\cdot\pa\cdot e^{Mh},\\
\label{rlas2}\Delta e^{Mh}&=&(e^{Mh}\ot e^{Mh})\Delta.
\end{eqnarray}
\end{proposition}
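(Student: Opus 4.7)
The plan is to transfer the gauge equivalence from derivations on $A$ to coderivations on $\bar{T}A$ through the map $\Psi$ that sends a formal series of derivations $\sum_{i\ge 0}t^{i}D_{i}$ to the coderivation $\sum_{i\ge 0}M_{i+1}D_{i}$. Under $\Psi$, the deformation $d=\sum t^{i}\delta_{i}$ is carried to the coderivation $\pa=\sum M_{i+1}\delta_{i}$ coding the higher derived products, and the gauge parameter $h=\sum_{i\ge 1}t^{i}h_{i}$ is carried to the $Mh$ of the statement. The equation $d^{\p}=\exp(X_{h})(d)$ should then push through $\Psi$ to give $\pa^{\p}=\exp(X_{Mh})(\pa)$, from which (\ref{rlas1}) and (\ref{rlas2}) will both follow.

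The first step is to extend Lemma~\ref{pl} by bilinearity to verify that $\Psi$ is a Lie algebra morphism: for $D(t)=\sum t^{i}D_{i}$ and $D^{\p}(t)=\sum t^{j}D^{\p}_{j}$ one computes
\begin{equation*}
[\Psi(D(t)),\Psi(D^{\p}(t))]=\sum_{i,j}[M_{i+1}D_{i},M_{j+1}D^{\p}_{j}]=\sum_{n}M_{n+1}\!\!\sum_{i+j=n}\!\![D_{i},D^{\p}_{j}]=\Psi([D(t),D^{\p}(t)]).
\end{equation*}
At the same time $\Psi$ must be shown to be well defined as a coderivation on $\bar{T}A$: restricted to $V^{\ot n}$, only the finitely many summands with $i+1\le n$ contribute, so $Mh$ and every iterated commutator $[\ldots[[\pa,Mh],Mh],\ldots,Mh]$ make sense term-by-term.

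Applying $\Psi$ to the gauge equation $d^{\p}=\exp(X_{h})(d)=\exp(-\mathrm{ad}_{h})(d)$ and iterating the Lie morphism property of the previous step yields
\begin{equation*}
\pa^{\p}=\Psi(d^{\p})=\exp(-\mathrm{ad}_{\Psi(h)})(\Psi(d))=\exp(X_{Mh})(\pa).
\end{equation*}
Invoking the standard associative-algebra identity $\exp(-\mathrm{ad}_{X})(Y)=e^{-X}Y e^{X}$ in $\End(\bar{T}A)$ then gives (\ref{rlas1}). Convergence is automatic: as a coderivation, each summand $M_{i+1}h_{i}$ strictly lowers the tensor degree by $i\ge 1$, so $Mh$ is locally nilpotent on $\bar{T}A$ and $e^{\pm Mh}$ truncates to a finite sum on every $V^{\ot n}$.

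For (\ref{rlas2}) I would invoke the general fact that the exponential of a (locally nilpotent) coderivation is a coalgebra morphism: writing the coderivation rule as $\Delta\c Mh=(Mh\ot 1+1\ot Mh)\c\Delta$ and noting that the two summands commute as endomorphisms of $\bar{T}A\ot\bar{T}A$, a binomial induction gives $\Delta\c(Mh)^{n}=\sum_{k}\binom{n}{k}\big((Mh)^{k}\ot(Mh)^{n-k}\big)\c\Delta$, whence $\Delta\c e^{Mh}=(e^{Mh}\ot e^{Mh})\c\Delta$. The main obstacle lies neither in the Lie-morphism step nor in the coalgebra step, but in keeping the Koszul signs straight: one must carefully track the signs hidden in the identification of $\Hom(\bar{T}A,A)$ with $\Coder(\bar{T}V)$ for $V=s^{-1}A$, and in the graded commutator underlying $X_{Mh}=[\,\cdot\,,Mh]$. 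Once these bookkeeping conventions are fixed, everything substantive reduces to a formal consequence of Lemma~\ref{pl}.
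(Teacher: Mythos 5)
Your proposal is correct and follows essentially the route the paper intends: the paper's own ``proof'' is merely a citation to the Loday-algebra version in \cite{U1}, and the argument there is exactly your transfer scheme --- use Lemma \ref{pl} to make $D(t)\mapsto\sum M_{i+1}D_{i}$ a morphism of Lie algebras, push the gauge equation through it, and exploit that $Mh$ is a degree-zero, tensor-degree-lowering (hence locally nilpotent) coderivation to get both (\ref{rlas1}) and the coalgebra-morphism identity (\ref{rlas2}) by the binomial induction. In effect you have written out the details the paper omits, with the convergence and sign points (local nilpotency on each $A^{\ot n}$, evenness of $|Mh|$ making the summands of $Mh\ot 1+1\ot Mh$ commute) handled correctly.
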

\begin{proof}
The proof of this proposition
is the same as the one in \cite{U1}.
\end{proof}
In general, an $A_{\infty}$-morphism is defined to be a dg coalgebra
morphism between $(\bar{T}A,\pa)$ and $(\bar{T}A^{\p},\pa^{\p})$.
Hence $e^{Mh}$ is an $A_{\infty}$-isomorphism.
\section{Loday pairs}
We introduce the concept ``Loday pair"
and study its homotopy algebras.
\subsection{Sh Loday algebras}
We recall sh Loday algebras.
Let $L$ be a graded vector space
and let $sL$ be the shifted space
and let $l_{i}:(sL)^{\ot i}\to sL$ be
a multilinear map with degree $2-i$, for each $i\ge 1$.
\begin{definition}\label{shdef}
(\cite{AP}, and see also \cite{U1})
The system with the multiplications,
$(sL,l_{1},l_{2},...)$,
is called a strong homotopy (sh)
Loday algebra (Loday $\infty$-algebra
or sh Loday algebra or Loday $\infty$-algebra),
if the collection $\{l_{i}\}_{i\ge 1}$
satisfies (\ref{homleib}) below.
\begin{multline}\label{homleib}
\sum_{i+j=Const}
\sum^{i+j-1}_{k=j}\sum_{\sigma}
\chi(\sigma)(-1)^{(k+1-j)(j-1)}(-1)^
{j(sx_{\sigma(1)}+...+sx_{\sigma(k-j)})}\\
l_{i}(sx_{\sigma(1)},...,sx_{\sigma(k-j)},
l_{j}(sx_{\sigma(k+1-j)},...,sx_{\sigma(k-1)},sx_{k}),
sx_{k+1},...,sx_{i+j-1})=0,
\end{multline}
where $(sx_{1},...,sx_{i+j-1})\in sL^{\ot(i+j-1)}$,
$\sigma$ is a $(k-j,j-1)$-unshuffle,
$\chi(\sigma)$ is an anti-Koszul sign,
$\chi(\sigma):=sgn(\sigma)\e(\sigma)$.
\end{definition}
Sh Lie algebras are special examples of sh Loday algebras
such that all $l_{i}$ ($i\ge2$) skewsymmetric.
It is easy to show this claim.
If each $l_{i\ge 2}$ is skewsymmetric, then
\begin{multline*}
l_{i}(sx_{\sigma(1)},...,sx_{\sigma(k-j)},
l_{j}(sx_{\sigma(k+1-j)},...,sx_{\sigma(k-1)},sx_{k}),
sx_{k+1},...,sx_{i+j-1})=\\
(\pm)
l_{i}(l_{j}(sx_{\sigma(k+1-j)},...,sx_{\sigma(k-1)},sx_{k}),
sx_{\sigma(1)},...,sx_{\sigma(k-j)},
sx_{k+1},...,sx_{i+j-1})=\\
(\pm)
l_{i}(l_{j}(sx_{\tau(1)},...,sx_{\tau(j)}),
sx_{\tau(j+1)},...,sx_{\tau(i+j-1)}),
\end{multline*}
where $\tau$ is an unshuffle permutation.
And $\sum^{i+j-1}_{k=j}\sum_{\sigma}$ changes into $\sum_{\tau}$.
Then (\ref{homleib}) becomes,
$$
\sum_{i+j=Const}\sum_{\tau}(\pm)
l_{i}(l_{j}(sx_{\tau(1)},...,sx_{\tau(j)}),
sx_{\tau(j+1)},...,sx_{\tau(i+j-1)})=0.
$$
This is the defining relation of sh Lie algebras.\\
\indent
The cofree nilpotent dual-Loday\footnote{
The word ``dual-" means the Koszul duality (\cite{GK}), i.e.,
the operad of dual-Loday algebras is the Koszul dual of the operad
of Loday algebras.}
coalgebra over $L$ is the tensor space $\bar{T}L$ with
a comultiplication, $\Delta_{L}:\bar{T}L\to\bar{T}L\ot\bar{T}L$,
defined by $\Delta_{L}(L):=0$ and
\begin{equation}\label{deflebco}
\Delta_{L}(x_{1},...,x_{n+1}):=
\sum^{n}_{i=1}\sum_{\sigma}\e(\sigma)
(x_{\sigma(1)},x_{\sigma(2)},...,x_{\sigma(i)})\ot
(x_{\sigma(i+1)},...,x_{\sigma(n)},x_{n+1}),
\end{equation}
where $\e(\sigma)$ is a Koszul sign,
$\sigma$ is an $(i,n-i)$-unshuffle.
Let $\Coder(\bar{T}L)$ be the space of
coderivations on $\bar{T}L$.
By a standard argument, we obtain an isomorphism:
$$
\Coder(\bar{T}L)\cong\Hom(\bar{T}L,L).
$$
We recall an explicit formula of the isomorphism.
Let $f:L^{\ot i}\to L$ be an $i$-ary map.
It is one of the generators in $\Hom(\bar{T}L,L)$.
The coderivation associated with $f$
is defined by $f^{c}(L^{\ot n<i}):=0$ and
\begin{multline}\label{deffc}
f^{c}(x_{1},...,x_{n\ge i}):=
\sum^{n}_{k=i}\sum_{\sigma}
\e(\sigma)(-1)^{f(x_{\sigma(1)}+...+x_{\sigma(k-i)})}\\
(x_{\sigma(1)},...,x_{\sigma(k-i)},
f(x_{\sigma(k+1-i)},...,x_{\sigma(k-1)},x_{k}),x_{k+1},...,x_{n}),
\end{multline}
where $\sigma$ is a $(k-i,i-1)$-unshuffle.
The inverse of $f\mapsto f^{c}$ is the restriction
(so-called corestriction).
The collection of $i$-ary maps, $\{l_{i}\}_{i\ge 1}$,
induces a collection of coderivations on $\bar{T}L$,
$\{\pa_{i}\}_{i\ge 1}$.
We put $\pa_{L}:=\pa_{1}+\pa_{2}+\cdot\cdot\cdot$.
The definition (\ref{homleib}) is equivalent to
the homogenous condition:
$$
\frac{1}{2}[\pa_{L},\pa_{L}]=\pa_{L}\pa_{L}=0.
$$
\subsection{Regularization}
For the Leibniz identity,
$[x_{1},[x_{2},x_{3}]]=[[x_{1},x_{2}],x_{3}]+[x_{2},[x_{1},x_{3}]]$,
one can regard the term ``$[x_{2},[x_{1},x_{3}]]$"
as an associative anomaly, that is,
$[x_{1},[x_{2},x_{3}]]-[[x_{1},x_{2}],x_{3}]\equiv 0$
modulus $[x_{2},[x_{1},x_{3}]]$.
We notice that the sh Loday relation (\ref{homleib})
has an sh associative anomaly.
We take out the {\em regular}\footnote{
The word ``regular" is used in the sense of $\sigma=id$,
i.e., the order of variables is regular.}
subterms from (\ref{homleib}):
\begin{multline}\label{homleib2}
\sum_{i+j=Const}
\sum^{i-1}_{a=0}
(-1)^{(a+1)(j-1)}(-1)^
{j(sx_{1}+...+sx_{a})}\\
l_{i}(sx_{1},...,sx_{a},
l_{j}(sx_{a+1},...,sx_{a+j}),
sx_{a+j+1},...,sx_{i+j-1}),
\end{multline}
where $a:=k-j$.
This is the defining relation of sh associative algebras.
Thus the no regular terms can be seen as the obstruction
for sh associativity.
In the same way,
we take out the regular subterms from (\ref{deflebco}).
Then it has the same form as the comultiplication
of the associative coalgebra (cf. (\ref{asscomul})):
\begin{equation}\label{deflebco2}
\Delta_{L}(x_{1},...,x_{n+1})\overset{\text{regular}}{\sim}
\sum^{n}_{i=1}
(x_{1},x_{2},...,x_{i})\ot
(x_{i+1},...,x_{n},x_{n+1}).
\end{equation}
As observed above, the associative world
is the regular subsystem in the Loday/Leibniz world.
Along this picture, we try to unify the
sh Loday/associative algebras.
\subsection{Unification}
We consider a pair of graded vector spaces $(L,A)$.
Set a tensor space:
$$
LA:=\sum_{n\ge 1}\sum_{i+j=n}L^{\ot i}\ot A^{\ot j}.
$$
Define a comultiplication $\Delta$ on $LA$ by the same manner
as (\ref{deflebco}).
For instance,
\begin{multline}\label{exla}
\Delta(x,a_{1},a_{2},a_{3})=
x\ot(a_{1},a_{2},a_{3})\pm a_{1}\ot(x,a_{2},a_{3})\pm
a_{2}\ot(x,a_{1},a_{3})\pm\\
(x,a_{1})\ot(a_{2},a_{3})\pm
(x,a_{2})\ot(a_{1},a_{3})\pm(a_{1},a_{2})\ot(x_{1},a_{3})\pm\\
(x_{1},a_{1},a_{2})\ot a_{3},
\end{multline}
where $x\in L$ and $a_{1},a_{2},a_{3}\in A$.
The space of coderivations on $(LA,\Delta)$,
$\Coder(LA,\Delta)$,
is identified with a subspace of $\Hom(LA,L\oplus A)$.
This identification is defined by the same rule as (\ref{deffc}).
For instance, if a binary map
$f:\sum_{i+j=2}L^{\ot i}\ot A^{\ot j}\to L\oplus A$
corresponds to a coderivation, then it satisfies
\begin{multline}\label{exfc}
f^{c}(x,a_{1},a_{2},a_{3})=(f(x,a_{1}),a_{2},a_{3})\pm\\
(x,f(a_{1},a_{2}),a_{3})\pm
(a_{1},f(x,a_{2}),a_{3})\pm\\
(x,a_{1},f(a_{2},a_{3}))\pm
(x,a_{2},f(a_{1},a_{3}))\pm
(a_{1},a_{2},f(x,a_{3})).
\end{multline}
We notice that $f(x,a_{2})$ and $f(a_{2},a_{3})$ are $A$-valued,
because the elements of $A$ must be put on the right side
of the elements of $L$. By a simple observation, we obtain
\begin{equation}\label{clahom}
\Coder(LA,\Delta)\cong
\Hom(\bar{T}L,L)\oplus
\sum_{i\ge 0,j\ge 1}\Hom(L^{\ot i}\ot A^{\ot j},A).
\end{equation}
\indent
We are regularizing $\Delta$ with respect to the order
of variables of $A$ (we say this operation an $A$-regularization).
For instance, the $A$-regularization of (\ref{exla}) is
\begin{multline*}
\Delta(x,a_{1},a_{2},a_{3})\overset{\text{$A$-regular}}{\sim}
x\ot(a_{1},a_{2},a_{3})\pm a_{1}\ot(x,a_{2},a_{3})\pm\\
(x,a_{1})\ot(a_{2},a_{3})\pm(a_{1},a_{2})\ot(x_{1},a_{3})\pm
(x_{1},a_{1},a_{2})\ot a_{3},
\end{multline*}
where the ordering $a_{1}<a_{2}<a_{3}$ is preserved.
We denote by $reg(\Delta)$ the regularized comultiplication.
It generally has the form:
$$
reg(\Delta)(\x,\a):=
\sum_{i+k}\sum_{\sigma}(\pm)
(x_{\sigma(1)},...,x_{\sigma(i)},a_{1},...,a_{k})\ot
(x_{\sigma(i+1)},...,x_{\sigma(m)},a_{k+1},...,a_{n}),
$$
where $\x=(x_{1},...,x_{m})$, $\a=(a_{1},...,a_{n})$
and $\sigma$ is an $(i,m-i)$-unshuffle.
Remark that the restrictions $reg(\Delta)|_{L}$
and $reg(\Delta)|_{A}$ coincide with the classical
comultiplications above, respectively.
Let $D^{c}$ be a coderivation on the coalgebra $(LA,\Delta)$.
We define the $A$-regularization of $D^{c}$, $reg(D^{c})$,
by regularizing the order of variables of $A$.
For instance, the regularization of (\ref{exfc}) becomes
\begin{multline*}
reg(f^{c})(x,a_{1},a_{2},a_{3})=(f(x,a_{1}),a_{2},a_{3})\pm\\
(a_{1},f(x,a_{2}),a_{3})\pm
(x,f(a_{1},a_{2}),a_{3})\pm\\
(x,a_{1},f(a_{2},a_{3}))\pm
(a_{1},a_{2},f(x,a_{3})).
\end{multline*}
\begin{lemma}
The regularization $reg(D^{c})$ is a coderivation on
$(LA,reg(\Delta))$, that is,
$$
\big(reg(D^{c})\ot 1+1\ot reg(D^{c})\big)reg(\Delta)
=reg(\Delta)reg(D^{c}).
$$
\end{lemma}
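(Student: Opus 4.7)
My plan is to derive the regularized identity directly from the full coderivation identity $(D^{c}\ot 1 + 1 \ot D^{c})\Delta = \Delta D^{c}$, which holds because $D^{c}$ is a coderivation on the cofree coalgebra $(LA,\Delta)$ as per (\ref{clahom}).

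First I would fix an arbitrary monomial $w = (x_{1},\ldots,x_{m},a_{1},\ldots,a_{n}) \in L^{\ot m} \ot A^{\ot n}$ and expand both sides of the unregularized identity as combinatorial sums. Each resulting tensor monomial $u_{1}\ot u_{2}$ is produced by a composition of two unshuffles (one from $\Delta$, one from $D^{c}$), and the left-to-right order of the $A$-variables $a_{j}$ across $u_{1}\ot u_{2}$ determines a permutation $\tau\in S_{n}$. The $A$-regularization, by its definition and the explicit formula for $reg(\Delta)$ given just above, selects precisely the sub-sum with $\tau = \mathrm{id}$; similarly $reg(D^{c})$ selects the $\tau = \mathrm{id}$ sub-sum of $D^{c}$.

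Second, I would examine the canonical term-by-term matching that underlies the coderivation identity. A term of $(D^{c}\ot 1)\Delta(w)$ is obtained by an unshuffle sending some variables of $w$ to $u_{1}$ and the rest to $u_{2}$, followed by an application of $D$ to a further unshuffled block of variables within $u_{1}$; the corresponding term of $\Delta D^{c}(w)$ simply reorders these two unshuffles, first selecting the $D$-block within $w$ and then unshuffling the remainder. The matching is characterized by the fact that the overall linear arrangement of variables across $u_{1}\ot u_{2}$ is the same on both sides, so the induced $A$-permutation $\tau$ is preserved by the bijection.

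The regularized identity then follows by restricting the full identity to its $\tau = \mathrm{id}$ component on both sides. The only real obstacle is sign bookkeeping: one must track the Koszul signs $\e(\sigma)$ coming from (\ref{deflebco}) and (\ref{deffc}) to confirm that matched terms carry identical signs when $\tau = \mathrm{id}$, so that the restricted identity is a genuine consequence of the full one and not an artefact of cancellation among non-regular terms. This is a routine, if careful, combinatorial check; no new algebraic input beyond the cofree coderivation property of $(LA,\Delta)$ is required.
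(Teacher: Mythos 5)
Your overall strategy --- project the cofree coderivation identity $(D^{c}\ot 1+1\ot D^{c})\Delta=\Delta D^{c}$ onto its $A$-regular part --- is exactly the paper's: the paper's proof consists of applying $reg$ to both sides. But there is a genuine gap at the step where you pass from ``restrict the full identity to its $\tau=\mathrm{id}$ component on both sides'' to ``the regularized identity follows.'' The lemma concerns \emph{composites of regularized operators}, $\big(reg(D^{c})\ot 1+1\ot reg(D^{c})\big)reg(\Delta)$ and $reg(\Delta)reg(D^{c})$, whereas your restriction produces the $\tau=\mathrm{id}$ components of the \emph{composites} $(D^{c}\ot 1+1\ot D^{c})\Delta$ and $\Delta D^{c}$. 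Identifying the two requires the factorization property that a composite term is $A$-regular if and only if both of its constituent unshuffles are, i.e. $reg\big((D^{c}\ot 1)\Delta\big)=(reg(D^{c})\ot 1)reg(\Delta)$ and $reg(\Delta D^{c})=reg(\Delta)reg(D^{c})$; these two equalities are precisely what the paper's proof asserts, and your argument assumes them silently. The point is not vacuous: one must rule out ``accidentally regular'' composites built from two non-regular pieces. On the side $\Delta D^{c}$ this is genuinely delicate, because the unshuffles in $\Delta$ can invert the reading order of two slots (already for three slots, (\ref{deflebco}) contains the term $x_{2}\ot(x_{1},x_{3})$, which pulls slot $2$ in front of slot $1$), so a coderivation term whose $A$-variables are out of order could a priori be re-sorted into regular position by the subsequent comultiplication. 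That this never happens uses the specific shape of (\ref{deffc}): the last argument $x_{k}$ of the $f$-block is pinned inside the block and carries the largest index among the rearranged variables, so if the $f$-slot (containing an offending $A$-variable $x_{q}$ with $q<p$) is moved in front of the front slot $x_{p}$, the pair $(x_{k},x_{p})$ creates a fresh $A$-inversion, while leaving it behind $x_{p}$ preserves the old inversion $(p,q)$. Hence regularity of the composite forces regularity of each factor --- a combinatorial argument your proposal neither supplies nor flags.

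Relatedly, your diagnosis that ``the only real obstacle is sign bookkeeping'' mislocates the difficulty. Regularization is a term \emph{selection}: every kept term retains whatever Koszul sign it already carries, so once the factorization above is established no sign comparison is needed at all --- which is why the paper's three-line proof contains no sign computation. Your term-by-term bijection between the two sides (which indeed preserves the final arrangement of variables, hence $\tau$) is a reasonable substitute for the implicit claim that $\tau$-components of an operator identity may be compared separately, but by itself it only yields the equality of the total-$\tau=\mathrm{id}$ parts of the two composites; without the factorization step it does not deliver the statement of the lemma.
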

\begin{proof}
Apply $reg$ on the both-side of
$\big(D^{c}\ot 1+1\ot D^{c}\big)\Delta
=\Delta D^{c}$.
We have
$reg\Big((D^{c}\ot 1)\Delta\Big)=
reg\Big((D^{c}\ot 1)reg(\Delta)\Big)=(reg(D^{c})\ot 1)reg(\Delta)$
and $reg\Big((1\ot D^{c})\Delta\Big)=
reg\Big((1\ot D^{c})reg(\Delta)\Big)=(1\ot reg(D^{c}))reg(\Delta)$.
Hence we obtain
$$
reg\Big(\big(D^{c}\ot 1+1\ot D^{c}\big)\Delta\Big)
=\big(reg(D^{c})\ot 1+1\ot reg(D^{c})\big)reg(\Delta).
$$
On the other hand, we obtain
$reg(\Delta D^{c})=reg(\Delta reg(D^{c}))=reg(\Delta)reg(D^{c})$.
Therefore we get the identity of the lemma.
\end{proof}
The space of coderivations on the regularized coalgebra
$(LA,reg(\Delta))$ also corresponds to the same
homomorphism space as (\ref{clahom}).
The above lemma implies that the correspondence
is the $A$-regularization of the isomorphism in (\ref{clahom}).
\subsection{Unified derived brackets}
Let $L$ be a Loday algebra
and let $A$ be an associative algebra.
We assume that the degrees of the multiplications on
$(L,A)$ are both zero (or even).
\begin{definition}
The pair $(L,A)$ equipped with 
a binary multiplication $[,]:L\ot A\to A$
is called a left-Loday pair, or simply, Loday pair,
if it satisfies
\begin{eqnarray}
\label{l1} [x,[y,a]]&=&[[x,y],a]+(-1)^{xy}[y,[x,a]],\\
\label{l2} [x,[a,b]]&=&[[x,a],b]+(-1)^{xa}[a,[x,b]],
\end{eqnarray}
where $x,y\in L$, $a,b\in A$ and
where $[x,y]$ is the Loday bracket on $L$ and
$[a,b]$ is the associative multiplication on $A$, i.e.,
$[a,[b,c]]=[[a,b],c]$ for any $a,b,c\in A$.
\end{definition}
The Loday pairs are algebraizations
of Leibniz algebroids (\cite{I}).
The classical Leibniz pair in \cite{FGV}
is the Lie version of our noncommutative Leibniz pair
(i.e. Loday pair).
We give two geometric examples of Loday pairs.
\begin{example}
(Courant bracket)
Let $M$ be a smooth manifold.
Consider a bundle $\mathcal{T}M:=TM\oplus T^{*}M$
(so-called generalized tangent bundle).
The Courant bracket is defined on
the space of sections of $\mathcal{T}M$,
$\Gamma\mathcal{T}M$, by
$[\xi_{1}+\theta_{1},\xi_{2}+\theta_{2}]
:=[\xi_{1},\xi_{2}]+\L_{\xi_{1}}\theta_{2}-i_{\xi_{2}}d\theta_{1}$,
where $\xi_{1},\xi_{2}\in\Gamma TM$
and $\theta_{1},\theta_{2}\in\Gamma T^{*}M$.
Then the pair $(\Gamma\mathcal{T}M,C^{\infty}(M))$
is a Loday pair.
\end{example}
\begin{example}\label{2ndex}
Let $(M,\pi)$ be a Poisson manifold
equipped with a Poisson structure tensor $\pi$.
The space of multivector fields $\Gamma\bigwedge^{\cdot}TM$
becomes a graded Poisson algebra of type $(-1,0)$,
whose Poisson bracket is known as Schouten-Nijenhuis (SN) bracket.
The Poisson tensor is a solution of
Maurer-Cartan equation $[\pi,\pi]_{SN}=0$.
Since the degree of $\pi$ is $+2$,
$d:=[\pi,-]_{SN}$ becomes a differential with degree $+1$.
This differential is the coboundary operator
of the Poisson cohomology.
We define a Loday bracket by $[X,Y]_{\pi}:=(-1)^{X}[dX,Y]_{SN}$
for any $X,Y\in\Gamma\bigwedge^{\cdot}TM$.
The bracket $[,]_{\pi}$ is the derived bracket of $SN$-bracket
by the Poisson structure.
Then the self pair
$(\Gamma\bigwedge^{\cdot}TM,\Gamma\bigwedge^{\cdot}TM)$
is a Loday pair with multiplications
$[,]_{\pi}$ and $\wedge$.
In particular, the sub-pair $(C^{\infty}(M),C^{\infty}(M))$
is the self pair of the Poisson algebra on $M$.
\end{example}
We get a natural result.
\begin{corollary}
The structure of Loday pair on $(sL,sA)$
is equivalent to a (binary) codifferential
on the regularized coalgebra $(LA,reg(\Delta))$.
\end{corollary}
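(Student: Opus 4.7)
The plan is to invoke the regularized isomorphism (\ref{clahom}), which by the preceding lemma identifies coderivations on $(LA, reg(\Delta))$ with elements of $\Hom(\bar{T}L, L) \oplus \sum_{i \ge 0, \, j \ge 1} \Hom(L^{\ot i} \ot A^{\ot j}, A)$. A \emph{binary} coderivation $D^c$ corresponds to exactly three binary generators: an element $\mu_L \in \Hom(L^{\ot 2}, L)$, an element $\rho \in \Hom(L \ot A, A)$, and an element $\mu_A \in \Hom(A^{\ot 2}, A)$. The ordering constraint built into (\ref{clahom}) --- the $L$-slots always precede the $A$-slots --- forbids binary generators of the forms $L \ot L \to A$, $A \ot L \to A$, or $L \ot A \to L$, so the binary data is exactly that of a Loday-pair candidate. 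After desuspending via $s$ as in the earlier sections of the paper, these three maps yield degree-$0$ operations $[x,y]$, $[x,a]$, $[a,b]$ on $(sL, sA)$ of the form prescribed by the definition of a Loday pair.

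It then remains to show that $[D^c, D^c] = 0$ is equivalent to the four Loday-pair axioms: the Loday identity on $L$, associativity on $A$, and the two derivation relations (\ref{l1}) and (\ref{l2}). Because $D^c$ is binary, it annihilates inputs of length $\le 1$ and $(D^c)^2$ therefore annihilates inputs of length $\le 2$. Since $(D^c)^2$ is itself a coderivation on $(LA, reg(\Delta))$, it is determined by its corestriction to $L \oplus A$, and that corestriction receives contributions only from length-$3$ inputs. The $L < A$ ordering forces exactly four such input types, each producing a single axiom. On $L^{\ot 3}$, $reg(\Delta)$ restricts to $\Delta_L$, so the computation reduces to the standard one identifying a square-zero coderivation on $\bar{T}L$ with a Loday bracket. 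On $A^{\ot 3}$, $reg(\Delta)$ restricts to the associative comultiplication (\ref{asscomul}) and the computation is the standard one for associativity. On $L^{\ot 2} \ot A$ the nontrivial terms of $(D^c)^2$ involve $\mu_L$ composed with $\rho$ and recombine into (\ref{l1}); on $L \ot A^{\ot 2}$ they involve $\rho$ composed with $\mu_A$ and recombine into (\ref{l2}).

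The main obstacle is sign bookkeeping: two independent suspensions, one on $L$ and one on $A$, must be transported past the binary operations and past one another. The computation is, however, of the same flavour as the one that passes between a square-zero coderivation on $(\bar{T}L, \Delta_L)$ and a graded Loday bracket on $L$, and the Koszul convention recalled in the Notations yields precisely the signs $(-1)^{xy}$ and $(-1)^{xa}$ appearing in (\ref{l1}) and (\ref{l2}). A subsidiary check is that $A$-regularization suppresses exactly those terms of $(D^c)^2$ that would otherwise displace an $A$-entry to the left of an $L$-entry; this is automatic because $reg(D^c)$ is a coderivation on $(LA, reg(\Delta))$ by the previous lemma, so every term it produces respects the $L<A$ order. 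Conversely, starting from a Loday pair, the same formulae define a degree-$1$ binary coderivation whose square vanishes on length-$3$ inputs by the four axioms and hence vanishes on all of $LA$ by the coderivation property, completing the equivalence.
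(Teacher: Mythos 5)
Your proof is correct and follows exactly the route the paper intends: the paper states this corollary without proof, presenting it as an immediate consequence of the regularization lemma and the identification (\ref{clahom}), and your elaboration (binary coderivations correspond to the three generators $\mu_L$, $\rho$, $\mu_A$; the corestriction of $(D^c)^2$ lives on length-$3$ inputs, whose four $L<A$-ordered types yield the Loday identity, associativity, and the two derivation relations (\ref{l1})--(\ref{l2})) is precisely the standard computation being suppressed. Your subsidiary observations --- that $j\ge 1$ in (\ref{clahom}) excludes $L^{\ot 2}\to A$ and $L\ot A\to L$ generators, and that the regularization lemma guarantees every term of $(D^c)^2$ respects the $L<A$ order --- correctly identify the two points where the argument differs from the pure Loday case.
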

This corollary leads us to
\begin{definition}
An sh (left) Loday pair is, by definition,
a pair $(sL,sA)$ equipped with a codifferential
on $(LA,reg(\Delta))$.
\end{definition}
We consider a derived bracket construction
in the category of Loday pairs.
\begin{definition}
A derivation on a Loday pair $(L,A)$ is,
by definition, a pair of derivations,
$D=(D_{L},D_{A})$,
$D_{L}\in\Der(L)$ and $D_{A}\in\Der(A)$
satisfying
$$
D[o_{1},o_{2}]=[Do_{1},o_{2}]+(-1)^{Do_{1}}[o_{1},Do_{2}]
$$
for any $o_{1},o_{2}\in(L,A)$.
We assume that the parity of
$D_{L}$ is equal with the one of $D_{A}$.
\end{definition}
\begin{example}
If $(L,A)$ is a Loday pair, then an adjoint action
$[x,-]$, $x\in L$, is a derivation.
\end{example}
A Loday pair $(L,A)$ is called a dg Loday pair,
if it has a differential $\delta$ on $(L,A)$.
It is easy to check that dg Loday pairs
are special sh Loday pairs such that
the higher homotopies vanish.
Given a dg Loday pair $(L,A,\delta)$,
define derived brackets by
\begin{eqnarray*}
\ [sx,sy]_{d}&:=&(-1)^{x}s[\delta x,y],\\
\ [sx,sa]_{d}&:=&(-1)^{x}s[\delta x,a],\\
\ [sa,sb]_{d}&:=&(-1)^{a}s[\delta a,b].
\end{eqnarray*}
Then the triple of the derived brackets provides
a new structure of Loday pair on $(sL,sA)$.
\begin{example}
In Example \ref{2ndex},
if $\pi^{\p}$ is a second Poisson tensor 
which is compatible with $\pi$, i.e.,
$[\pi,\pi^{\p}]_{SN}=0$,
then $\delta:=[\pi^{\p},-]_{SN}$
is a differerential on the Loday pair.
\end{example}
We consider the higher derived bracket construction
for Loday pairs. Let $D$ be a derivation on
a Loday pair $(L,A)$. We put
\begin{equation}\label{nmtr}
N_{k}D(\x,\mathbf{a}):=
[[...[[[...[Dx_{1},x_{2}],...],x_{i}],a_{i+1}],...],a_{i+j}],
\end{equation}
where $\x:=(x_{1},...,x_{i})$,
$\mathbf{a}:=(a_{i+1},...,a_{i+j})$
and $k:=i+j$,
in particular, $N_{k}D(\a)=M_{k}D(\a)$.
\begin{lemma}\label{nnn}
We regard $N_{\cdot}D$ as a coderivation
on $(LA,reg(\Delta))$.
For any derivations $D,D^{\p}$ on $(L,A)$
and for any $k,l\ge 1$,
$$
[N_{k}D,N_{l}D^{\p}]=N_{k+l-1}[D,D^{\p}].
$$
\end{lemma}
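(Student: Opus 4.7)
The plan is to mimic the proof of Lemma \ref{pl}, which is precisely this statement in the special case that all arguments lie in $A$ (so $N_k = M_k$). The essential ingredient is that a derivation $D = (D_L, D_A)$ of a Loday pair, by definition, satisfies the Leibniz rule with respect to all three binary operations in play: the Loday bracket on $L$, the left action $L \otimes A \to A$, and the associative product on $A$. Since $N_k D$ consists of a single application of $D$ to the head argument, followed by iterated left bracketing through whichever of these three operations is appropriate at each step, the derivation property propagates uniformly.

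Concretely, I would expand $N_k D^c \circ N_l D'^c$ on a mixed tuple $(y_1, \dots, y_{k+l-1}) \in LA$ using the regularized coderivation formula, and split the resulting sum into (a) terms where the $k$-tuple fed to $N_k D$ is disjoint from the output slot of $N_l D'$, and (b) terms where $N_k D$ \emph{swallows} that output slot. The disjoint terms (a) appear identically in $N_l D'^c \circ N_k D^c$ and cancel in the graded commutator. Among the swallowing terms (b), the output of $N_l D'$ either enters the first slot of $N_k D$, so that $D$ hits the head of the nested bracket $N_l D'(\dots)$, or it enters a later slot. Using the derivation property of $D$ to push $D$ through the iterated bracket $N_l D'(\dots)$ slot by slot yields one diagonal term of the shape $N_{k+l-1}(DD')$, together with push-off terms where $D$ has migrated onto a non-first argument; these push-off terms exactly cancel against the later-slot contributions as well as against the analogous terms from $N_l D'^c \circ N_k D^c$. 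After the cancellations one reads off $N_{k+l-1}(DD') - (-1)^{|D||D'|} N_{k+l-1}(D'D) = N_{k+l-1}[D,D']$, which is the desired identity.

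The main obstacle is sign bookkeeping: the regularized coproduct $reg(\Delta)$ intermixes the Koszul-signed shuffling of $L$-arguments with the ordered concatenation of $A$-arguments, and each of the three multiplications contributes its own Koszul signs when $D$ is moved across a bracket. Once the signs are tracked slot by slot, however, the combinatorics in each sector (all-$L$, mixed, all-$A$) reduces to that of Lemma \ref{pl}, and no essentially new phenomenon appears beyond the associative case already handled there; one may even reduce to the degree-zero case, as in the proof of Lemma \ref{pl}, and reinstate signs at the end.
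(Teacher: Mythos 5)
There is a genuine gap, and it lies exactly where you wave your hands: the claim that the push-off terms (where $D$ has migrated onto a non-first argument of $N_{l}D^{\p}(\ldots)$) cancel \emph{exactly} against the later-slot contributions and their mirrors. That mechanism is what works in Lemma \ref{pl}, where strict associativity lets every term be written as a flat product $a_{1}*\cdots*a_{i+j-1}$ with two operators inserted, so that matching terms are literally identical. In the Loday sector this fails. Already for $k=l=2$ on pure $L$-variables, $[N_{2}D,N_{2}D^{\p}](x,y,z)$ leaves, after removing the head terms, residues such as $[[D^{\p}x,Dy],z]$, $[Dx,[D^{\p}y,z]]$ and $[Dy,[D^{\p}x,z]]$, which are pairwise distinct bracketings; they cancel only after invoking the Leibniz identity of the Loday bracket (and, in the mixed case, the pair relations (\ref{l1})--(\ref{l2})), not by term-by-term matching. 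So your assertion that ``no essentially new phenomenon appears beyond the associative case'' is precisely the false step: what survives the naive cancellation is not a single diagonal term but a whole family of cross terms indexed by unshuffle splittings of $\x$, forced on you by the unshuffles in the coderivation formula (\ref{deffc}).

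The paper's proof supplies exactly the structure your proposal is missing. It first rewrites $N_{k}D(\x,\a)=(M_{j}\what{D\x})(\a)$, using that the adjoint action $\what{L}=[L,-]$ is a derivation on $A$ by (\ref{l2}); this pushes the mixed operators into the associative sector, where Lemma \ref{pl} can be reused. The commutator then evaluates, by the intermediate formula (\ref{mint}), to $\sum_{(\x_{1},\x_{2})}M_{|\a|}[\what{D\x_{1}},\what{D^{\p}\x_{2}}](\a)$, a sum over \emph{all} unshuffle splittings of $\x$ --- these are the surviving cross terms that your scheme claims disappear. Resumming this to $M_{|\a|}\what{[D,D^{\p}]\x}(\a)=N_{k+l-1}[D,D^{\p}](\x,\a)$ then requires two further nontrivial inputs: the Leibniz-rule identity $[[A,B,\y],-]=\sum_{(\y_{1},\y_{2})}[[[A,\y_{1}],[B,\y_{2}]],-]$ used to prove (\ref{pape}) in the Appendix, and an induction on the length of $\x$. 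Your proposal contains no counterpart to either step (the degree-zero reduction, by contrast, is harmless and matches the paper's practice), and without them the slot-by-slot expansion cannot close.
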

We will give a proof of this lemma
in the end of this section.
The main result of this section is as follows.
\begin{proposition}\label{lasprop}
Let $(L,A,\delta_{0})$ be a dg Loday pair.
We consider a deformation of $\delta_{0}$,
$d:=\sum_{i\ge 0}t^{i}\delta_{i}$.
For each $k\ge 1$, define a coderivation by
$$
\pa_{k}:=N_{k}\delta_{k-1}.
$$
Then $\pa:=\sum_{k}\pa_{k}$ is a structure of sh Loday pair.\\
\indent
The multilinear map
$N_{k}\delta_{k-1}(\x,\mathbf{a})$ corresponds to
the higher bracket on the sifted pair $(sL,sA)$:
$$
n_{k}(sx_{1},...,sx_{i},sa_{i+1},...,sa_{i+j}):=
(\pm)s[[...[[[...[\delta_{i+j-1}x_{1},x_{2}],...],x_{i}],
a_{i+1}],...],a_{i+j}],
$$
where $k:=i+j$, $k\ge 1$ and
\begin{eqnarray*}
\pm:=\left\{
\begin{array}{ll}
(-1)^{o_{1}+o_{3}+\cdot\cdot\cdot+o_{2n+1}+\cdot\cdot\cdot} & i+j=\text{even},\\
(-1)^{o_{2}+o_{4}+\cdot\cdot\cdot+o_{2n}+\cdot\cdot\cdot} & i+j=\text{odd}.
\end{array}
\right.
\end{eqnarray*}
and where $o_{\cdot}\in\{x_{\cdot},a_{\cdot}\}$.
The restrictions $(sL,n|_{sL})$ and $(sA,n|_{sA})$
become an sh Loday algebra and an sh associative algebra,
respectively.
\end{proposition}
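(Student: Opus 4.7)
The plan is to mimic the proof of Theorem \ref{main} using Lemma \ref{nnn} in place of Lemma \ref{pl}. First I would check that each $\pa_k := N_k \delta_{k-1}$ is a coderivation on the $A$-regularized coalgebra $(LA, reg(\Delta))$; this is immediate from the fact that $\delta_{k-1}$ is a derivation of the Loday pair $(L,A)$ (since $d = \sum t^i \delta_i$ is a derivation on both components of the pair), combined with the regularized version of the isomorphism (\ref{clahom}).

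The heart of the proof is then the computation
\begin{equation*}
\tfrac{1}{2}[\pa, \pa] \;=\; \sum_{n \ge 1} \tfrac{1}{2} \sum_{k+l = n+1} [N_k \delta_{k-1}, N_l \delta_{l-1}]
\;=\; \sum_{n \ge 1} N_n \Bigl( \tfrac{1}{2} \sum_{i+j = n-1} [\delta_i, \delta_j] \Bigr) \;=\; 0,
\end{equation*}
where the middle equality is Lemma \ref{nnn} applied term by term and the final vanishing is the deformation condition (\ref{dd0}) applied on the Loday pair. This shows that $\pa$ is a codifferential and hence defines an sh Loday pair structure on $(sL, sA)$.

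For the explicit formula for $n_k$, I would repeat the sign calculation performed in the proof of the lemma immediately following Theorem \ref{main}: pull the shift operators $s^{\ot k}$ through $s \delta_{k-1} s^{-1} \ot 1^{\ot (k-1)}$ using the Koszul rule, combining the $(-1)^{(k-1)(k-2)/2}$ from reordering the $s$'s with the $(-1)^{k-1}$ from commuting $\delta_{k-1}$ past a single $s$. Because the three binary operations constituting the Loday-pair bracket all have even internal degree, the resulting sign depends only on the parity of $k$ and the alternating sum of the input degrees, yielding the pattern $(-1)^{o_1 + o_3 + \cdots}$ or $(-1)^{o_2 + o_4 + \cdots}$ uniformly in whether each $o_\cdot$ is drawn from $L$ or $A$.

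Finally, the restriction statement is read off from definition (\ref{nmtr}): when every input lies in $A$, $N_k \delta_{k-1}$ collapses to $M_k \delta_{k-1}$, so $(sA, n|_{sA})$ is exactly the sh associative algebra of Theorem \ref{main}; when every input lies in $L$, $N_k \delta_{k-1}$ reduces to the iterated derived Loday bracket, and the same bracket identity of Lemma \ref{nnn} restricted to the $L$-factor recovers the sh Loday structure of \cite{U2}. The only substantive obstacle is Lemma \ref{nnn} itself; once that bracket identity is in hand, Proposition \ref{lasprop} follows by the formal calculation displayed above, and the sh sub-structures on $sL$ and $sA$ fall out as a direct corollary.
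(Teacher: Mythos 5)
Your proposal is correct and takes essentially the same route as the paper: the paper likewise deduces Proposition \ref{lasprop} from Lemma \ref{nnn} by repeating verbatim the formal bracket computation used for Theorem \ref{main} (with $N_{k}$ in place of $M_{k}$, so that $[\pa,\pa]=0$ reduces via $[N_{k}\delta_{k-1},N_{l}\delta_{l-1}]=N_{k+l-1}[\delta_{k-1},\delta_{l-1}]$ to the deformation condition $\sum_{i+j=Const}\delta_{i}\delta_{j}=0$), and it treats the sign pattern and the restrictions to $sL$ and $sA$ exactly as you do. The only substantive content is Lemma \ref{nnn} itself, which both you and the paper correctly isolate as the crux.
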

We give a proof of Lemma \ref{nnn}.
To show this lemma we use convenient symboles:
\begin{eqnarray*}
\ [x_{1},...,x_{i}]&:=&[[[x_{1},x_{2}],...],x_{i}],\\
\ D\x&:=&[Dx_{1},...,x_{i}],
\end{eqnarray*}
where $\x:=(x_{1},...,x_{i})$.
The pure Loday version of the lemma was shown in \cite{U1}.
We consider the mixed case.
Since the adjoint action
$\what{L}:=[L,-]$ is a derivation on $A$, (\ref{nmtr}) becomes
$$
N_{k}D(\x,\mathbf{a})=(M_{j}\what{D\x})(\a),
$$
We denote by $|\cdot|$ the length of word.
\begin{lemma}
Assume that $|(\x,\a)|:=k+l-1$ and $|\a|\ge 1$.
\begin{equation}\label{mint}
[N_{k}D,N_{l}D^{\p}](\x,\a)
=\sum_{(\x_{1},\x_{2})}M_{|\a|}[\what{D\x_{1}},\what{D^{\p}\x_{2}}](\a),
\end{equation}
where $(\x_{1},\x_{2})$ runs over the unshuffle-permutations
including $(\emptyset,\x)$ and $(\x,\emptyset)$.
\end{lemma}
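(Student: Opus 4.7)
The key structural observation --- already flagged just before the lemma --- is that, for $|\a|=j\ge 1$,
$$
N_kD(\x,\a)=M_j\what{D\x}(\a),
$$
where $\what{D\x}:=[D\x,-]$ is a derivation on $(A,*)$ by the Loday-pair axiom (\ref{l2}). Hence once the $\x$-arguments going into each of $N_kD$ and $N_lD^{\p}$ have been fixed, the residual $\a$-part is a problem purely about derivations of $A$, which is exactly the setting of Lemma \ref{pl}.

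My plan is to expand both compositions $N_kD\c N_lD^{\p}$ and $N_lD^{\p}\c N_kD$ applied to $(\x,\a)$ using the explicit coderivation formula on the regularized coalgebra $(LA,reg(\Delta))$, and to group the resulting terms according to an unshuffle $(\x_1,\x_2)$ of $\x$ recording the split of the $L$-variables between the two operations (so $\x_1$ is absorbed into $D\x_1$ by $N_kD$, and $\x_2$ into $D^{\p}\x_2$ by $N_lD^{\p}$). An arity count --- the input has length $k+l-1$ and the bracket's output has length $1$ --- forces $|\x_1|+|\x_2|=|\x|$ (every $L$-variable is absorbed) and $j_1+j_2=|\a|+1$ (the two $A$-chunks overlap in exactly one position). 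The boundary unshuffles $(\emptyset,\x)$ and $(\x,\emptyset)$ capture the ``nested'' subcases in which one operation produces an $L$-valued output ($j_i=0$) that the other then absorbs into its $L$-arguments; with the convention $\what{D\emptyset}:=D_A$ and $\what{D^{\p}\emptyset}:=D^{\p}_A$ --- consistent with the stipulation $N_kD(\emptyset,\a)=M_kD(\a)$ --- these fit uniformly into the same sum.

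For a fixed unshuffle $(\x_1,\x_2)$ with $j_1,j_2\ge 1$, the subsum in the bracket reorganizes --- this is the heart of the argument --- into exactly $[M_{j_1}\what{D\x_1},M_{j_2}\what{D^{\p}\x_2}](\a)$, viewed as a bracket of coderivations on $\bar{T}A$ applied to $\a$; Lemma \ref{pl} then collapses this to $M_{|\a|}[\what{D\x_1},\what{D^{\p}\x_2}](\a)$. For a boundary unshuffle (say $\x_1=\emptyset$, so $j_2=0$) the nested composition produces $\what{D(D^{\p}\x_2)}$, which is rewritten via axiom (\ref{l1}) --- equivalently, via the fact that $\what{\cdot}:L\to\Der(A)$, $y\mapsto[y,-]$, is a morphism of Loday algebras, $\what{[y_1,y_2]}=[\what{y_1},\what{y_2}]$ --- into $\what{D(D^{\p}\x_2)}=[D_A,\what{D^{\p}\x_2}]$, so that the contribution again takes the form $M_{|\a|}[\what{D\emptyset},\what{D^{\p}\x_2}](\a)$. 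Summing over all unshuffles recovers (\ref{mint}).

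The main obstacle is the combinatorial and sign bookkeeping: one has to verify that the regularized-coderivation expansion on $LA$ genuinely partitions cleanly along the $\x$-unshuffles, and track the Koszul signs through the $A$-regularization. The pure-Loday analogue of this grouping was already established in \cite{U1}, so the novel content here is only the coupling to the associative $A$-side, supplied by Lemma \ref{pl} together with the Loday-morphism identity for $\what{\cdot}$.
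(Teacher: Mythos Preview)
Your proposal identifies the right ingredients --- the reduction $N_kD(\x,\a)=M_{|\a|}\what{D\x}(\a)$, Lemma~\ref{pl}, and the morphism property of $\what{\cdot}$ --- but there is a genuine gap in how you match the ``nested'' terms to the unshuffle sum.

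The mis-step is the identification ``$\x_1=\emptyset$, so $j_2=0$''. In fact $j_2=l-|\x_2|$, so $j_2=0$ means $|\x_2|=l$, which has nothing to do with $\x_1$ being empty. The unshuffles $(\emptyset,\x)$ and $(\x,\emptyset)$ are \emph{not} the ones coming from the nested (pure-$L$) subcase; conversely, the nested subcase contributes far more than two terms. Concretely, when $N_lD^{\p}$ produces an $L$-valued output one obtains all terms of the form $[D\x_1,D^{\p}\x_2,\x_3,\a]$ with $|\x_2|=l$ and an extra tail $\x_3$ of $L$-variables that neither operation has yet absorbed; your single identity $\what{D(D^{\p}\x)}=[D_A,\what{D^{\p}\x}]$ handles only the very special case $\x_1=\x_3=\emptyset$. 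What is actually needed is the iterated-Leibniz expansion
\[
[[D\x_1,D^{\p}\x_2,\x_3],-]\;=\;\sum_{(\y_1,\y_2)}\big[[D(\x_1,\y_1),D^{\p}(\x_2,\y_2)],-\big],
\]
summed over unshuffles of $\x_3$, which the paper carries out in its Appendix (identity~(\ref{pape})). This is precisely what converts the nested terms into the portion $\sum_{|\x_1|\ge k\text{ or }|\x_2|\ge l}M_{|\a|}[\what{D\x_1},\what{D^{\p}\x_2}](\a)$ of the final sum --- i.e.\ exactly the unshuffles \emph{not} covered by the mixed--mixed commutator (those require $|\x_1|<k$ and $|\x_2|<l$). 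Deferring to \cite{U1} does not cover this, because (\ref{pape}) is a statement about mixed $(\x,\a)$ inputs, not the pure-$L$ result proved there.
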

\begin{proof}
$N_{l}D^{\p}(\x,\a)$ is decomposed into
the pure Loday term (if it exists) and the mixed term:
\begin{eqnarray*}
N_{l}D^{\p}(\x,\a)&=&
\sum_{|\x_{2}|=l}(\x_{1},D^{\p}\x_{2},\x_{3},\a)+
\sum_{|\x_{2}|<l}(\x_{1},\a_{1},D^{\p}(\x_{2},\a_{2}),\a_{3})\\
&=&
\sum_{|\x_{2}|=l}(\x_{1},D^{\p}\x_{2},\x_{3},\a)+
\sum_{|\x_{2}|<l}(\x_{1},(M_{l-|\x_{2}|}\what{D^{\p}\x_{2}})(\a)).
\end{eqnarray*}
We have
\begin{multline*}
N_{k}D\c N_{l}D^{\p}(\x,\a)=\\
\sum_{|\x_{2}|=l}[D\x_{1},D^{\p}\x_{2},\x_{3},\a]+
\sum_{|\x_{2}|<l}
\Big(M_{k-|\x_{1}|}\what{D\x_{1}}\c M_{l-|\x_{2}|}\what{D^{\p}\x_{2}}\Big)(\a),
\end{multline*}
and this gives
\begin{multline*}
[N_{k}D,N_{l}D^{\p}](\x,\a)=\\
\sum_{|\x_{2}|=l}[D\x_{1},D^{\p}\x_{2},\x_{3},\a]-
\sum_{|\x_{2}|=k}[D^{\p}\x_{1},D\x_{2},\x_{3},\a]+\\
\sum_{|\x_{1}|<k,|\x_{2}|<l}
[M_{k-|\x_{1}|}\what{D\x_{1}},M_{l-|\x_{2}|}\what{D^{\p}\x_{2}}](\a).
\end{multline*}
The third term has the desired formula:
$$
\sum_{\text{$|\x_{1}|<k$ and $|\x_{2}|<l$}}
M_{|\a|}[\what{D\x_{1}},\what{D^{\p}\x_{2}}](\a).
$$
It is not difficult to show that (see Appendix below)
\begin{equation}\label{pape}
\sum_{|\x_{2}|=l}[D\x_{1},D^{\p}\x_{2},\x_{3},\a]-
\sum_{|\x_{2}|=k}[D^{\p}\x_{1},D\x_{2},\x_{3},\a]=
\sum_{\text{$|\x_{1}|\ge k$ or $|\x_{2}|\ge l$}}
M_{|\a|}[\what{D\x_{1}},\what{D^{\p}\x_{2}}](\a).
\end{equation}
Hence we obtain (\ref{mint}).
\end{proof}
If $\x=x$ or $|\x|=1$, then
$(\x_{1},\x_{2})\in\{(\emptyset,x),(x,\emptyset)\}$
and
\begin{eqnarray*}
\sum_{(\x_{1},\x_{2})}M_{|\a|}[\what{D\x_{1}},\what{D^{\p}\x_{2}}]&=&
M_{|\a|}[D,\what{D^{\p}x}]+M_{|\a|}[\what{Dx},D^{\p}]\\
&=&M_{|\a|}\what{[D,D^{\p}]x}.
\end{eqnarray*}
By using induction for the length of $\x$,
one can easily prove that
$$
\sum_{(\x_{1},\x_{2})}M_{|\a|}[\what{D\x_{1}},\what{D^{\p}\x_{2}}]=
M_{|\a|}\what{[D,D^{\p}]\x}.
$$
Hence we obtain the identity of Lemma \ref{nnn}:
$$
[N_{k}D,N_{l}D^{\p}](\x,\a)=(M_{|\a|}\what{[D,D^{\p}]\x})(\a)
=N_{l+k-1}[D,D^{\p}](\x,\a).
$$
\textbf{Appendix}. We show (\ref{pape}).
For any $A,B\in L$ and for any $\y:=(y_{1},...,y_{n})\in L^{\ot n}$,
by the Leibniz rule, we have
$$
[[A,B,\y],-]=\sum_{(\y_{1},\y_{2})}[[[A,\y_{1}],[B,\y_{2}]],-]
$$
where $(\y_{1},\y_{2})$ are the unshuffle permutations of $\y$
including $(\emptyset,\y)$ and $(\y,\emptyset)$.
Now, replace $A\to D\x_{1}$, $B\to D^{\p}\x_{2}$ and $\y\to \x_{3}$.
Then
\begin{eqnarray*}
\sum_{|\x_{2}|=l}[D\x_{1},D^{\p}\x_{2},\x_{3},\a]
&=&\sum_{|\x_{2}|=l}[[D\x_{1},\y_{1}],[D^{\p}\x_{2},\y_{2}],\a] \\
&=&\sum_{|\x_{2}|=l}[D(\x_{1},\y_{1}),D^{\p}(\x_{2},\y_{2}),\a] \\
&=&\sum_{|\x_{2}|\ge l}[[D\x_{1},D^{\p}\x_{2}],\a] \\
&=&\sum_{|\x_{2}|\ge l}M_{|\a|}\what{[D\x_{1},D^{\p}\x_{2}]}(\a)
=\sum_{|\x_{2}|\ge l}M_{|\a|}[\what{D\x_{1}},\what{D^{\p}\x_{2}}](\a)
\end{eqnarray*}
where $\x_{i}:=(\x_{i},\y_{i})$ redefined $i\in\{1,2\}$.
The other term is, by the same manner,
$$
-\sum_{|\x_{2}|=k}[D^{\p}\x_{1},D\x_{2},\x_{3},\a]
=\sum_{|\x_{1}|\ge k}M_{|\a|}[\what{D\x_{1}},\what{D^{\p}\x_{2}}](\a).
$$
This implies (\ref{pape}).

\noindent
Post doctoral.\\
Tokyo University of Science.\\
3-14-1 Shinjyuku Tokyo Japan.\\
e-mail: K\underline{ }Uchino[at]oct.rikadai.jp
\end{document}